\newtheorem{theorem}{Theorem}[section]
\newtheorem{corollary}[theorem]{Corollary}
\theoremstyle{definition}
\newtheorem{definition}{Definition}[section]
\theoremstyle{remark}
\newcommand\mn{{\rm M}_n}
\newcommand\mpee{{\rm M}_p}
\newcommand\mpeen{{\rm M}_{pn}}
\newcommand\mr{{\rm M}_r}
\newcommand\mtwopee{{\rm M}_{2p}}
\newcommand\un{{\rm U}_n}                
\newcommand\ose{{\mathcal E}}
\newcommand\osr{{\mathcal R}}
\newcommand\oss{{\mathcal S}}
\newcommand\ost{{\mathcal T}}
\begin{document}

\title{Arveson's Criterion for Unitary Similarity}
\author{Douglas Farenick}
\address{Department of Mathematics and Statistics,
University of Regina,
Regina, Saskatchewan S4S 0A2, Canada}


\maketitle

\begin{abstract}
This paper is an exposition of W.B.~Arveson's complete invariant for the
unitary similarity of complex, irreducible matrices.
\end{abstract}
  

\section*{Introduction}
\label{S:Intro}

Forty years ago W.B.~Arveson announced an important theorem
concerning the unitary similarity problem \cite{arveson1970}.
His proof of the theorem appeared two years later as a consequence of a deep study
\cite{arveson1969, arveson1972} that profoundly influenced
the subsequent development of operator algebra theory.
With the richness of the operator-algebraic results in these seminal papers,
Arveson's significant and novel contribution to linear algebra has been somewhat overshadowed.
Therefore, my aims with this exposition 
are to draw attention again to this remarkable result and
to give a self-contained proof of it. 

The method of proof is
different from Arveson's (and from Davidson's treatment \cite{davidson1981} of Arveson's approach),
and so may be considered new. However, the arguments
draw upon known results, adapted to the setting, language, and notation of linear algebra. The significant ideas
are due to other mathematicians; I have merely reconfigured them in a package
accessible to readers with a background in core linear algebra. 

The paper is intended to be self-contained. Results that have found their way into textbooks
are merely recalled for the reader's benefit. The standard references used here are
the books of Horn and Johnson \cite{Horn--Johnson-book} (for linear algebraic analysis)
and Paulsen \cite{Paulsen-book} (for completely positive linear transformations of matrix spaces).
I provide proofs for
results that may be well known (Dunford's Ergodic Theorem \cite{dunford1943},
Kadison's Isometry Theorem \cite{kadison1951}), but are not in standard textbooks. In such cases, 
the proofs treat the problem at hand rather than the
most general situation. 

We shall use the following terminology and notation.
The set of $n\times n$ matrices over the field $\mathbb C$ of complex
numbers is denoted by $\mn$, and for every $X\in\mn$ the conjugate transpose of $X$ is denoted by $X^*$. 
A matrix $X\in\mn$ is: \emph{hermitian}
if $X^*=X$; \emph{positive semidefinite} if $X=Y^*Y$ for some $Y\in\mn$; 
 \emph{unitary} if $X$ is invertible and $X^{-1}=X^*$. 
The spectral (or operator) norm of $X\in\mn$ is 
given by
\[
\|X\|\,=\,\sqrt{\mbox{\rm spr}\,(X^*X)}\,,
\]
where $\mbox{spr}\,(Y)$ denotes the spectral radius of $Y\in \mn$.
The \emph{closed unit ball} of $\mn$ is the set
\[
\mbox{\rm Ball}\left(\mn\right)\,=\,\{X\in\mn\,:\, \|X\|\leq 1\}\,,
\]
which is a convex set whose set of extreme points is $\un$ \cite{kadison1951}, \cite[\S3.1, Problem 27]{Horn--Johnson-book}.
In the metric topology of $\mn$ induced by the spectral norm, the sets $\mbox{\rm Ball}\left(\mn\right)$ and $\un$
are compact.

\section{The Unitary Similarity Problem}

Two matrices $A,B\in\mn$ are said to be \emph{unitarily similar}  
if $B=U^*AU$ for some $U\in\un$.

\begin{definition} Let $\mathcal O\subseteq\mn$ be  fixed, nonempty
subset of matrices. The \emph{unitary similarity problem for $\mathcal O$} is to find 
a countable family $\mathcal F_{\mathcal O}$ of functions
defined on $\mathcal O$ with the following two properties:
\begin{enumerate}
\item\label{invar} $f(U^*AU)=f(A)$, for all $A\in \mathcal O$, $U\in\un$, $f\in\mathcal F_{\mathcal O}$;
\item\label{complete invar} $f(A)=f(B)$, for fixed $A,B\in\mathcal O$ and for all $f\in\mathcal F_{\mathcal O}$, if and only if $B=U^*AU$ for some $U\in\un$.
\end{enumerate}
\end{definition} 

Condition \eqref{invar} above asserts that the functions $f\in\mathcal F_{\mathcal O}$ are invariant under unitary similarity and condition \eqref{complete invar}
says that these invariants are complete in the sense that if matrices $A,B\in\mathcal O$ are not unitarily equivalent, then $f(A)\neq f(B)$ for at least
one of the invariants $f\in\mathcal F_{\mathcal O}$.

In the best of circumstances, the set $\mathcal O$ is $\mn$, but that is not always to be the case, and instead one may
require that the set $\mathcal O$ be an algebraic variety or possess some good topological properties.
The set $\mathcal O$ considered by Arveson is of the latter type: it has the topology
of a second countable complete metric space.

Although now twenty years old, the survey paper by Shapiro \cite{shapiro1991} remains
a good reference for an overview of the unitary similarity problem. Perhaps the most celebrated of
all contributions to the problem are two classical results: Specht's trace invariants \cite{specht1940} and 
Littlewood's algorithm \cite{littlewood1953}.

\section{Statement of Arveson's Theorem}

\begin{definition}  Assume $X,P\in\mn$.
\begin{enumerate}
\item $P$ is a \emph{projection} if $P^*=P$ and $P^2=P$.
\item $X\in\mn$ is \emph{irreducible} if $XP=PX$, for a projection $P$, holds only if $P\in\{0,I\}$,
where $I\in\mn$ denotes the identity matrix. 
\item  $\mathcal O_{\rm irr}$ denotes the set of all irreducible matrices in $\mn$.
\end{enumerate}
\end{definition}

Equivalently, $X\in\mn$ is irreducible if and only if the algebra generated by the set $\{I,X,X^*\}$ is $\mn$.
The set  $\mathcal O_{\rm irr}$ is a dense $G_\delta$-set
\cite{halmos1968}. Therefore, $\mathcal O_{\rm irr}$ is a Polish space,
which is to say that (in the relative topology) $\mathcal O_{\rm irr}$ is a second countable complete metric space.

The set $\mathcal S$ of pairs 
$(H,K)$ of $n\times n$ matrices with entries in $\mathbb Q+i\mathbb Q$ is countable and dense in $\mn\times\mn$. Let $\mathcal F_{\mathcal O_{\rm irr}}$ be
the family of functions $f_{(H,K)}$, $(H,K)\in\mathcal S$, defined on $\mn$ by
\[
f_{(H,K)}(A)\;=\;\left\| A\otimes H+I\otimes K\right\|\,,\;A\in\mn\,.
\]
Because $U\otimes I\in {\rm U}_{n^2}$ (the unitary group of $\mn\otimes\mn$) 
for all $U\in\un$, it is clear that $f_{(H,K)}(U^*AU)=f(A)$
for all $U\in\un$ and $A\in\mn$. Hence, $\mathcal F_{\mathcal O_{\rm irr}}$ is a countable family of 
unitary similarity invariants for $\mn$. The following theorem shows, using the fact that $\mathcal S$ is dense in $\mn\times\mn$, 
that $\mathcal F_{\mathcal O_{\rm irr}}$ is a complete invariant for unitary similarity for the class $\mathcal O_{\rm irr}$.

\begin{theorem}\label{main result} {\rm (Arveson)} The following statements are equivalent for $A,B\in \mn$ such that $A\in\mathcal O_{\rm irr}$:
\begin{enumerate}
\item[{\rm (i)}]\label{cond1} $\|A\otimes H + I\otimes K\|\,=\,\|B\otimes H + I\otimes K\|$, for all $H,K\in \mn$;
\item[{\rm (ii)}]\label{cond2} $B=U^*AU$ for some $U\in\un$.
\end{enumerate}
\end{theorem}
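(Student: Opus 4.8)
The implication (ii) $\Rightarrow$ (i) is the routine one: if $B=U^*AU$, then for every $H,K\in\mn$ the matrix $B\otimes H+I\otimes K$ is the conjugate of $A\otimes H+I\otimes K$ by the unitary $U\otimes I\in{\rm U}_{n^2}$, since $(U^*\otimes I)(A\otimes H)(U\otimes I)=U^*AU\otimes H=B\otimes H$ and $(U^*\otimes I)(I\otimes K)(U\otimes I)=I\otimes K$; as conjugation by a unitary preserves the spectral norm, (i) follows. The content is the converse, and the plan is to convert (i) into a statement about a completely isometric map and then rigidify it into a unitary conjugation.

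First I would reformulate (i). Since $A$ is irreducible and $n\ge2$, $A$ is not a scalar, so $\{I,A\}$ is linearly independent and the rule $\phi(\alpha I+\beta A)=\alpha I+\beta B$ defines a unital linear isomorphism of $V=\mathrm{span}\{I,A\}$ onto $W=\mathrm{span}\{I,B\}$. Viewing $\mn\otimes\mn$ as $\mn(\mn)$ and using that the flip $X\otimes Y\mapsto Y\otimes X$ is a $*$-isomorphism, the quantity $\|A\otimes H+I\otimes K\|$ equals the norm of the element $(h_{ij}A+k_{ij}I)_{i,j}$ of $\mn(V)$; hence (i) says precisely that the $n$-th amplification $\phi_n$ is isometric, i.e. $\|\phi_n\|=\|(\phi^{-1})_n\|=1$. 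By Smith's lemma, the completely bounded norm of a map into $\mn$ is already attained at the $n$-th amplification, so both $\phi$ and $\phi^{-1}$ are completely contractive and $\phi$ is a unital complete isometry of $V$ onto $W$.

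Next I would pass to the ambient matrix algebra. Because a unital complete contraction on a unital operator space extends to a unital completely positive map on the $\cstar$-algebra it generates (Paulsen's characterisation together with Arveson's extension theorem), I obtain unital completely positive maps $\Phi,\Psi\colon\mn\to\mn$ with $\Phi|_V=\phi$ and $\Psi|_W=\phi^{-1}$; in particular $\Phi(A)=B$ and $\Psi(B)=A$. The composite $\Theta=\Psi\circ\Phi$ is then a unital completely positive map on $\mn$ fixing $I$ and $A$, and, being $*$-preserving, also fixing $A^*$. The goal is to show $\Theta=\mathrm{id}$. Here I would invoke irreducibility: since $A$ generates $\mn$ as a $\cstar$-algebra, it suffices to place $A$ in the multiplicative domain of $\Theta$, for then that domain is a $\cstar$-subalgebra containing $I,A,A^*$, hence equals $\mn$, forcing $\Theta$ to be a unital $*$-endomorphism of $\mn$ that fixes the generator $A$ and therefore equals the identity. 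To get $A$ into the multiplicative domain I would use the Kadison--Schwarz inequalities $\Theta(A^*A)\ge A^*A$ and $\Theta(AA^*)\ge AA^*$ and force equality by a positivity-and-unitality bookkeeping on the products generated by $A$ and $A^*$; the Cesàro averages of $\Theta$, which converge by Dunford's Ergodic Theorem to the completely positive projection onto the fixed-point space, are the natural device for organising this.

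Once $\Theta=\mathrm{id}$, the map $\Phi$ is injective, hence bijective, with inverse $\Psi$; as $\Phi$ and $\Phi^{-1}$ are both completely contractive, $\Phi$ is a unital surjective linear isometry of $\mn$. Kadison's Isometry Theorem then identifies $\Phi$ as a Jordan $*$-isomorphism of $\mn$, namely either $X\mapsto U^*XU$ or $X\mapsto U^*X^{\rm t}U$ for some $U\in\un$; the transpose alternative is an anti-homomorphism that is not even $2$-positive, so it is excluded by the complete positivity of $\Phi$. Thus $\Phi(X)=U^*XU$, and evaluating at $A$ gives $B=\Phi(A)=U^*AU$, which is (ii). I expect the genuine obstacle to be the rigidity step $\Theta=\mathrm{id}$: this is exactly where irreducibility must be used essentially (the identity representation being a boundary representation for $V$), and verifying that $A$ actually enters the multiplicative domain, rather than merely being fixed, is the delicate heart of the argument.
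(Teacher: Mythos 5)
Your overall architecture is exactly the paper's: use the norm identities to build unital completely positive maps $\Phi,\Psi:\mn\rightarrow\mn$ with $\Phi(A)=B$ and $\Psi(B)=A$ (Smith's lemma, Paulsen's Proposition 3.5, and the Arveson extension theorem, just as in the paper), show that $\Theta=\Psi\circ\Phi$ is the identity, deduce that $\Phi$ is a unital completely positive isometry, and finish with Kadison's theorem (your detour through the full Jordan-isomorphism form of Kadison, excluding the transpose by $2$-positivity, is a correct variant of the paper's self-contained special case). The problem is that the one step carrying all the depth --- the rigidity statement $\Theta=\mathrm{id}$, which is precisely the paper's Boundary Theorem (Theorem \ref{boundary thm}) --- is not proved in your proposal; it is only announced as a strategy, and you yourself flag it as ``the delicate heart of the argument.''

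Moreover, the strategy you sketch does not close as stated. The Kadison--Schwarz inequality gives $\Theta(A^*A)\geq\Theta(A)^*\Theta(A)=A^*A$, but no amount of ``positivity-and-unitality bookkeeping'' forces equality: $\Theta$ need not preserve the trace, so there is no functional available to kill the positive matrix $\Theta(A^*A)-A^*A$; and iterating (or Ces\`aro-averaging, via the Ergodic Theorem) only produces the increasing sequence $\Theta^k(A^*A)$ converging to a fixed point $\Omega(A^*A)\geq A^*A$, so that concluding equality amounts to knowing $A^*A$ is already a fixed point --- which is the very thing you are trying to prove. One can repair this route with a faithful $\Theta$-invariant state, but producing faithfulness is itself a nontrivial use of irreducibility that you have not supplied. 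The paper closes the gap differently: the Ces\`aro limit $\Omega$ is a conditional expectation onto the fixed-point space $\oss$; by Choi--Effros, $\oss$ is a ${\rm C}^*$-algebra under $X\odot Y=\Omega(XY)$; the conditional-expectation identity $\Omega(YZ)=\Omega(Y\Omega(Z))$ then upgrades $\Omega$ to an algebra homomorphism of $\mn$ onto $(\oss,\odot)$; and since $\mn$ is simple and $\oss$ contains $I$, $A$, $A^*$ (hence generates $\mn$), the homomorphism $\Omega$ must be injective, so the idempotent $\Omega$ is the identity and $\oss=\mn$. Some argument of this kind (or an equivalent) must be written out before your proof is complete.
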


Note that if neither $A$ nor $B$ is assumed to be irreducible, then {\rm (i)} does not imply {\rm (ii)}.
In particular, if $X$ is any irreducible matrix and if $A=X\oplus X$ and $B= X\oplus 0$, then $A$ and $B$
satisfy {\rm (i)} but not {\rm (ii)}.

The key steps in the proof of Theorem \ref{main result} are:
\begin{enumerate}
\item to show that there are unital completely positive linear transformations $\phi,\psi:\mn\rightarrow\mn$
such that $\phi(A)=B$ and $\psi(B)=A$;
\item to show that, for the transformation $\omega=\psi\circ\phi$ on $\mn$, the condition $\omega(A)=A$ implies
that $\omega(X)=X$ for every $X\in\mn$ (this is the heart of the argument and is called the \emph{Boundary Theorem});
\item to show that if a unital completely positive linear transformation of $\mn$ is an isometry, then it must be a unitary
similarity transformation (this result is known as \emph{Kadison's Isometry Theorem});
\item to use $X=\psi(\phi(X))$ for all $X\in\mn$ to show that $\phi$ is an isometry and, hence, a unitary
similarity transformation.
\end{enumerate}

\section{Completely Positive Linear Transformations of Matrix Spaces}

For a fixed $n\in\mathbb N$, our interest is with linear transformations 
$\phi:\mn\rightarrow\mn$ that leave certain matrix cones invariant,
not just at the level of $\mn$ itself, but at the level of all matrix rings over $\mn$.
 
\begin{definition} {\rm (Two Identifications of Matrix Spaces)}
Fix $n\in\mathbb N$. For every $p\in\mathbb N$ the ring $\mpeen$ of $pn\times pn$ matrices
is considered in the following two equivalent ways:
\begin{enumerate} 
\item as block matrices---namely $\mpeen=\mpee(\mn)$, the ring of $p\times p$ matrices over the ring $\mn$;
\item as tensor (Kronecker) products---that is, $\mpeen=\mn\otimes\mpee$.
\end{enumerate}
The identity matrix of $\mpee(\mn)$ is denoted by $I_n\otimes I_p$.
Likewise, if $\ost\subseteq\mn$
is any subspace, then $\mpee(\ost)$ denotes the vector space
of all $p\times p$ matrices with entries from
$\ost$ and is identified with $\ost\otimes\mpee$.
\end{definition}

\begin{definition}  {\rm (Matricial Cones and Orderings)} If $\osr\subseteq\mn$ is a subspace of matrices with the properties
\begin{enumerate}
\item $I\in\osr$ and
\item $X^*\in \osr$ for every $X\in\osr$,
\end{enumerate}
then the \emph{canonical matricial cones of $\osr$} are the sets
\[
\mpee(\osr)_+\,=\,\{H\in\mpee(\osr)\,:\,H\;\mbox{is a positive semidefinite matrix}\}\,.
\]
If $\mpee(\osr)_{\rm sa}$ denotes the real vector space of hermitian matrices of $\mpee(\osr)$
and if $X,Y\in \mpee(\osr)_{\rm sa}$, then $X\leq Y$ denotes $Y-X\in\mpee(\osr)_+$; this is called the
\emph{canonical matricial ordering of $\osr$}.
\end{definition}

The matricial cones of $\osr$ have extremely good cone-theoretic properties.
First, the set $\mpee(\osr)_+$ is a cone in the usual sense of being closed under multiplication by positive scalars and
finite sums. Moreover: this cone is \emph{pointed}, which is to say that
$\mpee(\osr)_+\cap\left(-\mpee(\osr)_+\right)=\{0\}$; it is \emph{reproducing}
in that  $\mpee(\osr)_{\rm sa}$ is obtained by taking all differences $H-K$, for $H,K\in\mpee(\osr)_+$; and
it is \emph{closed}
in the topology of $\mpee(\mn)$. Such a cone is said to be \emph{proper}.
Since $\mpee(\osr)=\mpee(\osr)_{\rm sa}+i\mpee(\osr)_{\rm sa}$, the cone $\mpee(\osr)_+$
spans $\mpee(\osr)$. 

The identity matrix of $\mpee(\mn)$ is an \emph{Archimedean order unit} for $\mpee(\osr)_{\rm sa}$: for every
$H\in \mpee(\osr)_{\rm sa}$ there is a $t>0$ such that $-t(I_n\otimes I_p)\leq H \leq t(I_n\otimes I_p)$ and 
$t(I_n\otimes I_p)+H\in\mpee(\osr)_+$ for all $t>0$ if and only if $H\in\mpee(\osr)_+$.

Lastly, there is an intimate relationship between the norm and the ordering: for every $Z\in\mpee(\osr)$,
\[
\|Z\|\;=\;\inf\left\{t>0\,:\, \left[
\begin{array}{cc}  t\,(I_n\otimes I_p) & Z \\ Z* & t\,(I_n\otimes I_p) \end{array}\right]
\in \mtwopee(\osr)_+
\right\}\,.
\]

\begin{definition} Assume that $\osr\subseteq\mn$ is a subspace that is closed under the
conjugate transpose $X\mapsto X^*$
and
contains the identity matrix, and let $\phi:\osr\rightarrow\mn$ be any linear transformation.
\begin{enumerate}
\item The \emph{norm} of $\phi$ is defined by $\|\phi\|=\max\{\|\phi(X)\|\,:\,X\in\osr,\;\|X\|=1\}$.
\item If $\osr=\mn$, then $\phi^k$ denotes $\phi\circ\dots\circ\phi$, the composition of $\phi$
with itself $k$ times.
\item For any $p\in\mathbb N$, $\phi^{(p)}$ denotes the linear transformation 
\[
\phi^{(p)}:\mpee(\osr)\rightarrow\mpee(\mn)\,,\quad\phi^{(p)}\left([X_{ij}]\right)\,=\,
[\phi(X_{ij})]\,.
\]
\item If $\phi^{(p)}$ maps $\mpee(\osr)_+$ into $\mpee(\mn)_+$, for every $p\in\mathbb N$, then $\phi$ is 
called a \emph{completely positive linear transformation}.
\item If $\phi$ is completely positive and if $\phi(I)=I$, then $\phi$ is called a \emph{ucp map} (unital completely positive).
\item If $\osr=\mn$ and if $\phi$ is a ucp map, then $\phi$ is called a \emph{conditional expectation} if $\phi^2=\phi$.
\end{enumerate}
\end{definition}

The following theorem captures a few of the most important features of completely positive linear transformations of
matrix spaces.

\begin{theorem}\label{cp thm} Assume that $\osr\subseteq\mn$ is a subspace that is closed under the conjugate transpose and
contains the identity matrix, and let $\phi:\osr\rightarrow\mn$ be a completely positive linear transformation.
\begin{enumerate}
\item\label{ext thm} {\rm (Arveson Extension Theorem)} There is a completely positive linear transformation 
$\Phi:\mn\rightarrow\mn$ such that $\Phi_{\vert\osr}=\phi$.
\item\label{kraus decom[} {\rm (Stinespring--Kraus--Choi Representation)} There are linearly independent matrices $V_1,\dots,V_r\in\mn$ such that
\begin{equation}\label{kraus}
\phi(X)\,=\,\sum_{j=1}^r V_j^*XV_j\,,\;\forall\,X\in\osr\,.
\end{equation}
\item\label{ucp proj}  If $\osr=\mn$ and if $\phi$ is a conditional expectation with range $\oss$, then
 \[
\phi(YZ)\,=\,\phi(Y\phi(Z))\,,\quad\forall\,Y\in\oss,\;Z\in \mn\,.
\]
\end{enumerate}
\end{theorem}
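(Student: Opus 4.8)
The plan is to prove the three assertions in sequence, using assertion (1) to reduce assertion (2) to a statement about maps defined on all of $\mn$, and treating assertion (3) by a Schwarz-inequality argument that exploits the idempotence $\phi^2=\phi$ twice. For assertion (1), the Arveson Extension Theorem, I would convert the problem of extending a completely positive \emph{transformation} into that of extending a positive linear \emph{functional}, where the Hahn--Banach theorem is available. Fixing the standard basis $e_1,\dots,e_n$ of $\cn$, I associate to each linear $\phi:\osr\to\mn$ the functional $s_\phi$ on $\mn(\osr)$ given by $s_\phi\bigl([X_{ij}]\bigr)=\sum_{i,j=1}^n\langle\phi(X_{ij})e_j,e_i\rangle$. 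One verifies that $\phi\mapsto s_\phi$ is a linear bijection between transformations $\osr\to\mn$ and functionals on $\mn(\osr)$, and --- this is the conceptual crux --- that $\phi$ is \emph{completely} positive if and only if $s_\phi$ is merely positive on the cone $\mn(\osr)_+$. Now $\mn(\osr)\subseteq\mn(\mn)={\rm M}_{n^2}$ is an inclusion of operator systems sharing the Archimedean order unit $I_n\otimes I_n$. Since a positive functional $s$ on an operator system with order unit $e$ satisfies $\|s\|=s(e)$, I extend $s_\phi$ by Hahn--Banach to a functional $\tilde s$ on ${\rm M}_{n^2}$ with $\|\tilde s\|=\|s_\phi\|$ and $\tilde s(I_n\otimes I_n)=s_\phi(I_n\otimes I_n)$; the equality $\|\tilde s\|=\tilde s(I_n\otimes I_n)$ then forces $\tilde s\geq0$. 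Reading $\tilde s$ backwards through the (now surjective) correspondence produces a completely positive $\Phi:\mn\to\mn$ restricting to $\phi$.

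For assertion (2), the Stinespring--Kraus--Choi representation, I may assume $\osr=\mn$ by assertion (1), since a representation of an extension restricts to one of $\phi$. I form the Choi matrix $C=\bigl[\phi(E_{ij})\bigr]_{i,j=1}^n\in\mn(\mn)$, where $E_{ij}$ are the matrix units of $\mn$. Because the block matrix $\bigl[E_{ij}\bigr]_{i,j}$ is positive semidefinite (it is $n$ times a rank-one projection) and $\phi^{(n)}$ preserves positivity, $C\geq0$. I take a spectral decomposition $C=\sum_{j=1}^r\eta_j\eta_j^*$ with $r=\operatorname{rank}C$ and pairwise orthogonal $\eta_1,\dots,\eta_r\in\cn\otimes\cn$, and let $V_j\in\mn$ be the matrix obtained by reshaping $\eta_j$ (the inverse of vectorization). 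A computation on matrix units then gives $\phi(X)=\sum_{j=1}^rV_j^*XV_j$, and since reshaping is a linear isomorphism carrying the linearly independent $\eta_j$ to the $V_j$, the matrices $V_1,\dots,V_r$ are linearly independent. Nothing here is delicate beyond fixing the reshaping convention so that the adjoints land in the stated positions.

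For assertion (3), I write $W=Z-\phi(Z)$; since $\phi^2=\phi$ we have $\phi(W)=0$, so the identity $\phi(YZ)=\phi(Y\phi(Z))$ is equivalent to $\phi(YW)=0$ for all $Y\in\oss$ and all $W$ with $\phi(W)=0$. As $\phi$ is unital and completely positive it obeys the Kadison--Schwarz inequality $\phi(T)^*\phi(T)\leq\phi(T^*T)$. Applying this to $T=tY+W$ with $t\in\mathbb R$, and using $\phi(Y)=Y$ and $\phi(W)=0$ so that $\phi(T)^*\phi(T)=t^2Y^*Y$, the inequality becomes $t^2D+tB+C\geq0$ for all real $t$, where $D=\phi(Y^*Y)-Y^*Y$, $B=\phi(Y^*W)+\phi(W^*Y)$, and $C=\phi(W^*W)$. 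The decisive move is to apply $\phi$ a second time: idempotence gives $\phi(D)=\phi(Y^*Y)-\phi(Y^*Y)=0$, while $B$ lies in the range of $\phi$ so $\phi(B)=B$. Feeding $t^2D+tB+C\geq0$ through the positive map $\phi$ therefore yields $tB+\phi(C)\geq0$ for all real $t$, which forces $B=0$. Replacing $W$ by $zW$ with $|z|=1$ and varying the phase separates the two summands of $B$, giving $\phi(Y^*W)=0$; finally, replacing $Y$ by $Y^*\in\oss$ gives $\phi(YW)=0$, as required.

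I expect the genuine obstacles to be two. The first is in assertion (1): establishing both halves of the correspondence $\phi\leftrightarrow s_\phi$ --- in particular that ordinary positivity of $s_\phi$ already encodes \emph{complete} positivity of $\phi$ --- together with the lemma that a functional attaining its norm at the order unit must be positive. The second is the double use of idempotence in assertion (3), where applying $\phi$ to the parametrized Schwarz inequality is exactly what annihilates the otherwise uncontrolled term $D$; this step matters because the range $\oss$ need not be closed under multiplication, so the usual multiplicative-domain route is unavailable. Assertion (2) is, by contrast, essentially bookkeeping once assertion (1) is in hand.
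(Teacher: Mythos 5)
Your proposal is correct, but note that the paper does not actually prove Theorem \ref{cp thm}: it cites Paulsen's book (Theorems 7.5, 4.1, and 15.2) for the three assertions. What you have written is, in effect, the standard finite-dimensional proofs underlying those citations, with two sensible substitutions that exploit the matrix setting. For assertion \eqref{ext thm} you use the correspondence $\phi\leftrightarrow s_\phi$ between maps $\osr\rightarrow\mn$ and functionals on $\mn(\osr)$ (Paulsen's Chapter 6 route), rather than the general Arveson extension theorem via BW-compactness that Theorem 7.5 of \cite{Paulsen-book} proves; this works precisely because the codomain is $\mn$, and it reduces everything to Hahn--Banach plus the lemma that norm attainment at the order unit forces positivity. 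For assertion \eqref{kraus decom[} you use Choi's rank decomposition of the Choi matrix instead of the Stinespring dilation of Paulsen's Theorem 4.1; after extending via \eqref{ext thm}, the orthogonal rank-one summands reshape (conjugate-linearly, which still preserves linear independence) into linearly independent Kraus operators, and restricting to $\osr$ gives the claim. For assertion \eqref{ucp proj} your argument --- Kadison--Schwarz applied to $T=tY+W$ with $\phi(W)=0$, followed by a second application of $\phi$ to kill the uncontrolled coefficient $D=\phi(Y^*Y)-Y^*Y$ and the $t\rightarrow\pm\infty$ argument forcing $B=0$ --- is essentially the Choi--Effros proof behind Paulsen's Theorem 15.2, and it is complete: the phase trick separating $\phi(Y^*W)$ from its adjoint and the substitution $Y\mapsto Y^*$ (licensed because $\oss=\mbox{ran}\,\phi$ is closed under adjoints, positive maps being adjoint-preserving) are all in order. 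Two ingredients you correctly flag but leave unproved are genuinely the load-bearing ones and are standard: the equivalence ``$s_\phi$ positive $\Leftrightarrow$ $\phi$ completely positive,'' whose nontrivial direction secretly contains Choi's theorem that $n$-positivity of a map into $\mn$ implies complete positivity, and the Kadison--Schwarz inequality, which follows from $2$-positivity and unitality via the positivity of the block matrix with entries $\phi(T^*T)$, $\phi(T)^*$, $\phi(T)$, $I$. Since the statement is quoted from textbooks in the paper, your self-contained treatment is, if anything, more informative than the source.
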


Proofs for the assertions in Theorem \ref{cp thm} are given, respectively, in Theorem 7.5, Theorem 4.1,  
and Theorem 15.2 of \cite{Paulsen-book}.

\section{An Ergodic Theorem}

The following result is special case of a theorem of Dunford \cite{dunford1943}.  

\begin{theorem}\label{ergodic thm}{\rm (Ergodic Theorem)} If $\omega:\mn\rightarrow\mn$ is a linear transformation
of norm $1$ and has $1$ as an eigenvalue, then
\begin{equation}\label{ergodic}
\lim_{m\rightarrow\infty}\,\frac{1}{m}\sum_{k=0}^{m-1} \omega^k
\end{equation}
exists and the limit $\Omega$ in {\rm \eqref{ergodic}}
is an idempotent linear transformation with range $\ker(\omega-{\rm id}_{\mn})$ and
kernel $\mbox{\rm ran}(\omega-{\rm id}_{\mn})$.
\end{theorem}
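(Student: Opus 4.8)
The plan is to establish the classical mean ergodic decomposition directly in the finite-dimensional vector space $\mn$ (of complex dimension $n^2$), using the power-boundedness forced by $\|\omega\|=1$. Write $\Omega_m=\frac{1}{m}\sum_{k=0}^{m-1}\omega^k$ for the Ces\`aro averages. The first observation I would record is that submultiplicativity of the operator norm gives $\|\omega^k\|\le\|\omega\|^k=1$ for every $k$, so $\|\Omega_m\|\le1$ for all $m$. Thus the averages are uniformly bounded, and it suffices to prove convergence on each of two complementary subspaces and then to add.

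Next I would analyze the action of $\Omega_m$ on $\ker(\omega-{\rm id}_{\mn})$ and on $\mbox{\rm ran}(\omega-{\rm id}_{\mn})$ separately. On the kernel the computation is immediate: if $\omega X=X$ then $\omega^k X=X$, so $\Omega_m X=X$ for every $m$. On the range a telescoping identity does the work: if $X=(\omega-{\rm id}_{\mn})Y$, then $\sum_{k=0}^{m-1}\omega^k(\omega-{\rm id}_{\mn})=\omega^m-{\rm id}_{\mn}$, whence
\[
\|\Omega_m X\|\,=\,\frac{1}{m}\|(\omega^m-{\rm id}_{\mn})Y\|\,\le\,\frac{2}{m}\|Y\|\,\longrightarrow\,0,
\]
again invoking $\|\omega^m\|\le1$. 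Hence $\Omega_m\to{\rm id}_{\mn}$ pointwise on the kernel and $\Omega_m\to0$ pointwise on the range.

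The hard part, and the only place where the hypothesis is used in an essential way, will be to show that these two subspaces are genuinely complementary: $\mn=\ker(\omega-{\rm id}_{\mn})\oplus\mbox{\rm ran}(\omega-{\rm id}_{\mn})$. In general $\ker S$ and $\mbox{\rm ran}\,S$ need not be independent, precisely when $S$ has a nontrivial nilpotent part at $0$ (here, when $\omega$ has a Jordan block of size greater than one at the eigenvalue $1$). I would rule this out by the limiting argument itself: if $X$ lies in both subspaces, the kernel computation gives $\lim_m\Omega_m X=X$ while the range computation gives $\lim_m\Omega_m X=0$, forcing $X=0$. Combined with the rank--nullity identity $\dim\ker(\omega-{\rm id}_{\mn})+\dim\mbox{\rm ran}(\omega-{\rm id}_{\mn})=n^2$, the trivial intersection upgrades to the direct sum decomposition; equivalently, this shows that the eigenvalue $1$ of $\omega$ is semisimple, which is exactly what power-boundedness enforces.

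Finally I would assemble the conclusion. Given the decomposition, every $X\in\mn$ splits uniquely as $X=X_0+X_1$ with $X_0\in\ker(\omega-{\rm id}_{\mn})$ and $X_1\in\mbox{\rm ran}(\omega-{\rm id}_{\mn})$, and the two pointwise limits yield $\Omega_m X\to X_0$. Therefore $\Omega_m$ converges to the linear transformation $\Omega$ that projects onto $\ker(\omega-{\rm id}_{\mn})$ along $\mbox{\rm ran}(\omega-{\rm id}_{\mn})$; such a $\Omega$ is idempotent with the stated range and kernel. The assumption that $1$ is an eigenvalue enters only to guarantee that this range is nonzero.
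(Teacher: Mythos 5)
Your proposal is correct, and it departs from the paper's proof at exactly the step you flag as the crux. The paper opens with the same two computations---$\Omega_m X = X$ for $X \in \ker(\omega - {\rm id}_{\mn})$, and the telescoping bound $\|\Omega_m X\| \le \frac{2}{m}\|Y\|$ for $X = (\omega-{\rm id}_{\mn})(Y)$---but it proves complementarity of the two subspaces through the Jordan canonical form: from $\|\frac{1}{m}\omega^m\| \le \frac{1}{m} \rightarrow 0$ it deduces that $\frac{1}{m}J^m \rightarrow 0$ for the Jordan form $J$ of $\omega$, which fails for any Jordan block $J_\ell(1)$ with $\ell > 1$; hence every block at the eigenvalue $1$ is $1\times 1$, i.e. $\ker\left((\omega-{\rm id}_{\mn})^2\right) = \ker(\omega-{\rm id}_{\mn})$, and this semisimplicity supplies the zero intersection that is then combined with rank-plus-nullity, just as you do. Your replacement---if $X$ lies in both subspaces then $X = \Omega_m X \rightarrow 0$, so $X=0$---is sound and strictly more elementary: it needs no canonical form, only the two limit computations already in hand, and it is the argument of the classical mean ergodic theorem, so it ports to power-bounded operators beyond finite dimensions (where one must additionally worry about closedness of the range, an issue that rank-plus-nullity settles here). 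Nothing downstream is lost, either: the paper's Corollary \ref{spectral cor} (semisimplicity of peripheral eigenvalues) still follows from your version, since the direct sum $\mn = \ker(\omega-{\rm id}_{\mn}) \oplus \mbox{\rm ran}(\omega-{\rm id}_{\mn})$ immediately yields $\ker\left((\omega-{\rm id}_{\mn})^2\right) = \ker(\omega-{\rm id}_{\mn})$. What the paper's Jordan route buys is that this spectral fact is made explicit within the proof, as equation \eqref{simple eval}, rather than extracted afterwards; what yours buys is brevity and generality. Your closing observation is also accurate, and consistent with how the paper uses the hypothesis: the assumption that $1$ is an eigenvalue serves only to guarantee that $\ker(\omega-{\rm id}_{\mn})$, and hence the limit projection $\Omega$, is nonzero.
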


\begin{proof} If $X\in\ker(\omega-{\rm id}_{\mn})$, then $\omega^k(X)=X$ for every $k\in\mathbb N$ and so
$\frac{1}{m}\sum_{k=0}^{m-1} \omega^k(X)=X$ for every $m\in\mathbb N$. Thus, on the subspace
$\ker(\omega-{\rm id}_{\mn})$, the limit in \eqref{ergodic} exists and coincides with the identity on
$\ker(\omega-{\rm id}_{\mn})$.

Suppose that $Y=(\omega-{\rm id}_{\mn})(X)$, for some $X\in\mn$. Thus,
\[
\begin{array}{rcl} 
\left\|\displaystyle\frac{1}{m}\displaystyle\sum_{k=0}^{m-1}\omega^k(Y)\right\|
&=& 
\left\|\displaystyle\frac{1}{m}\displaystyle\sum_{k=0}^{m-1}\omega^k\left(\omega-{\rm id}_{\mn}\right)(X)\right\|  \\ && \\
&=& \left\|\displaystyle\frac{1}{m}\left(\omega^m-{\rm id}_{\mn}\right)(X)\right\| \\ && \\
&\leq& \frac{1}{m}\,\|\omega^m-{\rm id}_{\mn}\|\,\|X\| \\ && \\
&\leq& \frac{2}{m}\,\|X\|\,.
\end{array}
\]
Hence, on the subspace
$\mbox{\rm ran}(\omega-{\rm id}_{\mn})$, the limit in \eqref{ergodic} exists and coincides with the zero transformation on
$\mbox{\rm ran}(\omega-{\rm id}_{\mn})$. 

For every $m\in\mathbb N$, $\|\frac{1}{m}\omega^m\|\leq\frac{1}{m}\|\omega\|^m=\frac{1}{m}$, and so $\frac{1}{m}\omega^m\rightarrow0$.
If $J$ is the Jordan canonical form of $\omega$, then $\frac{1}{m}J^m\rightarrow 0$ as well. This is true for every Jordan block of $J$
and in particular for every $\ell\times\ell$ Jordan block $J_\ell(1)$ for the eigenvalue $1$ of $\omega$. But if $\ell>1$, then $\frac{1}{m}J_\ell(1)^m$
fails to converge to the zero matrix, and so it must be that $\ell=1$. This proves that 
\begin{equation}\label{simple eval}
\ker\left((\omega- {\rm id}_{\mn})^2\right)\,=\,\ker(\omega- {\rm id}_{\mn})\,.
\end{equation}

The Rank-Plus-Nullity Theorem asserts that the dimensions of $\ker(\omega-{\rm id}_{\mn})$ and $\mbox{\rm ran}(\omega-{\rm id}_{\mn})$
sum to $n^2=\mbox{\rm dim}\,\mn$. Equation \eqref{simple eval} shows that 
$\ker(\omega-{\rm id}_{\mn})$ and $\mbox{\rm ran}(\omega-{\rm id}_{\mn})$
have zero intersection. Hence, $\mn$ is an algebraic direct sum of $\ker(\omega-{\rm id}_{\mn})$ and $\mbox{\rm ran}(\omega-{\rm id}_{\mn})$,
which proves that the limit \eqref{ergodic} exists and that the limit $\Omega$ 
is an idempotent.
\end{proof}

\begin{corollary}\label{spectral cor} If $\omega:\mn\rightarrow\mn$ is a linear transformation such that
$\|\omega\|=1$, and if $\lambda$ is an eigenvalue of
$\omega$ such that $|\lambda|=1$, then $\lambda$ is a semisimple eigenvalue in the sense that
\[
\ker\left((\omega-\lambda\,{\rm id}_{\mn})^2\right)\,=\,\ker(\omega-\lambda\,{\rm id}_{\mn})\,.
\]
\end{corollary}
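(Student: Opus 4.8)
The plan is to reduce the general unimodular case to the case $\lambda = 1$, which is already settled in the course of proving Theorem \ref{ergodic thm}. The device is a scalar rotation: I would set $\mu = \bar\lambda\,\omega$ and verify that $\mu$ meets the hypotheses of the Ergodic Theorem. Because $|\lambda| = 1$ we have $|\bar\lambda| = 1$, whence $\|\mu\| = |\bar\lambda|\,\|\omega\| = 1$. Moreover, if $X \neq 0$ satisfies $\omega(X) = \lambda X$, then $\mu(X) = \bar\lambda\lambda\,X = |\lambda|^2 X = X$, so $1$ is an eigenvalue of $\mu$. Thus Theorem \ref{ergodic thm} applies to $\mu$.

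The key point is that the Jordan-block argument inside the proof of Theorem \ref{ergodic thm} yields more than the existence and idempotency of the limit $\Omega$: it yields the intermediate identity \eqref{simple eval}, that is, $\ker\bigl((\mu - {\rm id}_{\mn})^2\bigr) = \ker(\mu - {\rm id}_{\mn})$ for any norm-one transformation $\mu$ having $1$ as an eigenvalue. I would invoke precisely this intermediate conclusion for $\mu$, and then transfer it back to $\omega$ using the factorization $\mu - {\rm id}_{\mn} = \bar\lambda\,\omega - {\rm id}_{\mn} = \bar\lambda\,(\omega - \lambda\,{\rm id}_{\mn})$, which is valid since $\bar\lambda\lambda = 1$. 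Multiplication by the nonzero scalar $\bar\lambda$ leaves kernels unchanged, so $\ker(\mu - {\rm id}_{\mn}) = \ker(\omega - \lambda\,{\rm id}_{\mn})$; squaring gives $(\mu - {\rm id}_{\mn})^2 = \bar\lambda^2\,(\omega - \lambda\,{\rm id}_{\mn})^2$ and hence $\ker\bigl((\mu - {\rm id}_{\mn})^2\bigr) = \ker\bigl((\omega - \lambda\,{\rm id}_{\mn})^2\bigr)$. Combining these equalities with \eqref{simple eval} applied to $\mu$ gives exactly the asserted semisimplicity of $\lambda$.

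There is really no hard step here: the corollary is essentially a rescaled restatement of \eqref{simple eval}. The only point requiring a moment's care is that the argument must cite the intermediate equation \eqref{simple eval} obtained \emph{within} the proof of Theorem \ref{ergodic thm}, rather than that theorem's final statement about the existence and idempotency of $\Omega$; it is the former that records the semisimplicity, whereas the latter packages it together with the algebraic direct-sum decomposition of $\mn$.
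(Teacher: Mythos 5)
Your proposal is correct and follows essentially the same route as the paper, whose entire proof is to set $\omega'=\frac{1}{\lambda}\omega$ and apply Theorem \ref{ergodic thm}; since $|\lambda|=1$ your $\mu=\bar\lambda\,\omega$ is the very same transformation, and your kernel-transfer computation just makes the paper's one-line reduction explicit. One small remark: your insistence that one must cite the intermediate equation \eqref{simple eval} rather than the theorem's final statement is unnecessary, because the statement itself (an idempotent $\Omega$ with range $\ker(\mu-{\rm id}_{\mn})$ and kernel $\mbox{\rm ran}(\mu-{\rm id}_{\mn})$) forces $\ker(\mu-{\rm id}_{\mn})\cap\mbox{\rm ran}(\mu-{\rm id}_{\mn})=\{0\}$, which already yields $\ker\left((\mu-{\rm id}_{\mn})^2\right)=\ker(\mu-{\rm id}_{\mn})$.
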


\begin{proof} Let $\omega'=\frac{1}{\lambda}\omega$ and apply Theorem \ref{ergodic thm}.
\end{proof}

Our main application of the Ergodic Theorem is:

\begin{corollary}\label{condition expectation cor} If $\omega:\mn\rightarrow\mn$ is a 
unital completely positive linear transformation, then
$\Omega=\displaystyle\lim_{m\rightarrow\infty}\,\frac{1}{m}\sum_{k=0}^{m-1} \omega^k$ 
is a conditional expectation with range $\{X\in\mn\,:\,\omega(X)=X\}$, the set of fixed points of $\omega$.
\end{corollary}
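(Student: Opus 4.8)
The plan is to deduce the entire statement from the Ergodic Theorem (Theorem \ref{ergodic thm}), supplemented only by the elementary stability of complete positivity under composition, averaging, and limits. First I would check that $\omega$ satisfies the two hypotheses of Theorem \ref{ergodic thm}. Since $\omega$ is unital, $\omega(I)=I$, so $I$ is an eigenvector and $1$ is an eigenvalue of $\omega$. Moreover, because $\omega$ is positive and unital, its norm is attained at the identity: one has $\|\omega\|\geq\|\omega(I)\|=\|I\|=1$, while the reverse inequality $\|\omega\|\leq\|\omega(I)\|=1$ is the standard fact that a unital positive linear map of $\mn$ has norm $\|\omega(I)\|$ (see \cite{Paulsen-book}). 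Hence $\|\omega\|=1$. With these two facts in place, Theorem \ref{ergodic thm} applies verbatim and yields at once that the Ces\`aro limit $\Omega$ exists, that $\Omega$ is idempotent, and that its range equals $\ker(\omega-{\rm id}_{\mn})=\{X\in\mn:\omega(X)=X\}$. This already settles the existence of $\Omega$, the relation $\Omega^2=\Omega$, and the claim about the range.

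It then remains to verify that $\Omega$ is a ucp map, so that it qualifies as a conditional expectation in the sense of the earlier definition. For unitality, I would note that $I$ is fixed by $\omega$, so $\omega^k(I)=I$ for every $k$ and every average sends $I$ to $I$; passing to the limit gives $\Omega(I)=I$. For complete positivity, fix $p\in\mathbb N$ and use that the ampliation $\phi\mapsto\phi^{(p)}$ is multiplicative, whence $(\omega^k)^{(p)}=(\omega^{(p)})^k$. Since $\omega^{(p)}$ carries $\mpee(\mn)_+$ into itself, so does each power $(\omega^{(p)})^k$, and therefore so does every average $\frac1m\sum_{k=0}^{m-1}(\omega^{(p)})^k$. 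Because the cone $\mpee(\mn)_+$ is closed in $\mpee(\mn)$, the limit $\Omega^{(p)}$ likewise maps $\mpee(\mn)_+$ into $\mpee(\mn)_+$. As $p$ was arbitrary, $\Omega$ is completely positive, and combined with unitality and idempotency this exhibits $\Omega$ as a conditional expectation with the asserted range.

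I do not expect a genuine obstacle here: the argument is essentially a matter of checking hypotheses and invoking known closure properties. The only point deserving a moment's care is the normalization $\|\omega\|=1$, since it is precisely what licenses the use of the Ergodic Theorem; everything else rests on the closedness of the positive-semidefinite cone and on the multiplicativity of the ampliation map $\phi\mapsto\phi^{(p)}$.
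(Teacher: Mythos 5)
Your proposal is correct and takes precisely the route the paper intends: the paper states this corollary without proof, as an immediate application of Theorem \ref{ergodic thm}, and your argument supplies exactly the omitted details. Checking that unitality and positivity give $\|\omega\|=1$ and the eigenvalue $1$ at $I$, and then obtaining complete positivity of $\Omega$ from the multiplicativity of the ampliation $\phi\mapsto\phi^{(p)}$ together with the closedness of the cones $\mpee(\mn)_+$, is the standard verification the author leaves to the reader.
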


A second application of the Ergodic Theorem is drawn from quantum information theory \cite{kuperberg2003}.
 
\begin{corollary}\label{pf} If $\omega:\mn\rightarrow\mn$ is a ucp map, then there is a sequence $\{k_j\}_{j\in\mathbb N}$
and a conditional expectation $\Phi$ on $\mn$ such that
\[
\Phi\,=\,\lim_{j\rightarrow\infty}\omega^{k_j}\,.
\]
Moreover, $\Phi$ is the unique conditional expectation in the set of cluster points of the set $\{\omega^k\}_{k\in\mathbb N}$.
\end{corollary}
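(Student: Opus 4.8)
The plan is to read the structure of the orbit $\{\omega^k\}_{k\in\mathbb N}$ directly off the spectral data of $\omega$ provided by Corollary \ref{spectral cor}. First I would record the ambient facts: since $\omega$ is ucp we have $\|\omega\|=1$, and each power $\omega^k$ is again ucp (complete positivity and unitality are preserved under composition). Consequently $\{\omega^k\}_{k\in\mathbb N}$ is a bounded subset of the finite-dimensional space of linear transformations on $\mn$, hence relatively compact, so its set of cluster points is nonempty. Moreover the conditions defining a ucp map---unitality together with $\phi^{(p)}$ carrying $\mpee(\mn)_+$ into $\mpee(\mn)_+$ for every $p$---are closed, so every cluster point of $\{\omega^k\}$ is itself ucp.

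To produce the conditional expectation I would split $\mn=V_0\oplus V_1$, where $V_1$ is the span of the eigenspaces $\ker(\omega-\lambda\,{\rm id}_{\mn})$ with $|\lambda|=1$ and $V_0$ is the sum of the generalized eigenspaces for the eigenvalues of modulus strictly less than $1$. On $V_0$ one has $\omega^k\to0$. By Corollary \ref{spectral cor} every unimodular eigenvalue is semisimple, so $\omega$ restricted to $V_1$ is diagonalizable, with eigenvalues $\lambda_1,\dots,\lambda_s$ on the unit circle and eigenspaces $E_1,\dots,E_s$. The point $(\lambda_1,\dots,\lambda_s)$ generates a cyclic subgroup of the torus $\mathbb T^s$ whose closure is a compact subgroup and therefore contains the identity $(1,\dots,1)$; hence there is a sequence $k_j\to\infty$ with $\lambda_i^{k_j}\to1$ for every $i$. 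Along this sequence $\omega^{k_j}|_{V_1}\to{\rm id}_{V_1}$ and $\omega^{k_j}|_{V_0}\to0$, so $\omega^{k_j}\to\Phi$, where $\Phi$ is the idempotent projecting $\mn$ onto $V_1$ along $V_0$. Being a limit of ucp maps, $\Phi$ is ucp, and since $\Phi^2=\Phi$ it is a conditional expectation.

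For uniqueness I would show that $\Phi$ is in fact the only idempotent among the cluster points. Any cluster point $\Psi=\lim_j\omega^{m_j}$ vanishes on $V_0$ and, after passing to a subsequence so that each sequence $\lambda_i^{m_j}$ converges to some $\mu_i$, acts on $V_1$ as $\bigoplus_i \mu_i\,{\rm id}_{E_i}$ with $|\mu_i|=1$. If $\Psi$ is idempotent---in particular if $\Psi$ is a conditional expectation---then $\mu_i^2=\mu_i$, which together with $|\mu_i|=1$ forces $\mu_i=1$ for every $i$; thus $\Psi=\Phi$. Hence no cluster point other than $\Phi$ can be a conditional expectation.

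The substantive point is the passage from semisimplicity of the unimodular part to the existence of a \emph{single} subsequence along which all of the eigenvalues $\lambda_i^{k}$ return simultaneously to $1$; this is exactly the closed-subgroup (Kronecker) argument on $\mathbb T^s$, and it is what replaces any appeal to a fixed-point theorem or to a compact-semigroup idempotent. The role of Corollary \ref{spectral cor} is essential twice: diagonalizability of $\omega|_{V_1}$ is needed for $\omega^{k_j}|_{V_1}$ to converge to the identity, and the resulting invertibility of $\Psi|_{V_1}$ is what forces an idempotent cluster point to be the identity on all of $V_1$ rather than a proper subprojection.
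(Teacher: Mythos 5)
Your proof is correct and takes essentially the same route as the paper's: both rely on Corollary \ref{spectral cor} to make the unimodular part of the spectrum semisimple, split off the strictly contracting generalized eigenspaces (where powers tend to $0$), choose powers along which all unimodular eigenvalues return simultaneously to $1$ to obtain an idempotent limit, and note that limits of ucp maps are ucp. The only difference is one of rigor: you supply the Kronecker/closed-subgroup recurrence argument and the $\mu_i^2=\mu_i$ computation for uniqueness, both of which the paper asserts without justification (``we may choose any sequence\dots'', ``Clearly this is the only such idempotent cluster point'').
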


\begin{proof} Suppose that $\omega$ is in Jordan canonical form $J$. By Corollary \ref{spectral cor}, 
every eigenvalue $\lambda$ of $\omega$ of modulus $1$ is semisimple, which is to say that the size of  
every Jordan block of $\lambda$ in $J$ is $1\times 1$. 
Hence, we may choose any sequence $\{k_j\}_{j\in\mathbb N}$ so that
the eigenvalues of $J^{k_j}$ 
accumulate around $1$ and $0$ as $j\rightarrow\infty$,
thereby yielding a limiting
matrix that is idempotent. Clearly this is the only such idempotent cluster point
of $\{J^k\}_{k\in\mathbb N}$. Going back from the Jordan form $J$ to $\omega$,
one concludes that $\Omega$ is a idempotent, unital, and completely positive.
\end{proof}

\section{Completely Positive Isometries of $\mn$}
 
A special case of a theorem of Kadison \cite[Theorem 10]{kadison1951} is:

\begin{theorem}\label{kadison} {\rm (Kadison's Isometry Theorem)} 
If $\phi:\mn\rightarrow\mn$ is a unital completely positive linear transformation 
such that $\|\phi(X)\|=\|X\|$ for all $X\in \mn$, 
then there exists $U\in\un$ such that $\phi(X)=U^*XU$ for all $X\in \mn$.
\end{theorem}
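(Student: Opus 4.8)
The plan is to show that the two hypotheses---complete positivity and isometry---force $\phi$ to be a unital $*$-automorphism of $\mn$, and then to invoke the fact that every such automorphism is inner. First I would record that $\phi$ is automatically bijective: an isometry is injective, and an injective linear self-map of the finite-dimensional space $\mn$ is surjective. Consequently $\phi$ carries $\mathrm{Ball}(\mn)$ onto itself, and a linear bijection preserving this convex set must permute its extreme points. Since the extreme points of $\mathrm{Ball}(\mn)$ are exactly the unitaries, it follows that $\phi(U)\in\un$ for every $U\in\un$.

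Next I would extract the Kadison--Schwarz inequality from complete positivity. Because $\phi$ is positive it is $*$-preserving, so $\phi(X^*)=\phi(X)^*$; applying $\phi^{(2)}$ to the positive block matrix $\left[\begin{smallmatrix} I & X \\ X^* & X^*X\end{smallmatrix}\right]$ and forming the Schur complement of the (invertible) corner $\phi(I)=I$ gives $\phi(X^*X)\ge\phi(X)^*\phi(X)$, and symmetrically $\phi(XX^*)\ge\phi(X)\phi(X)^*$. For a unitary $U$ the element $\phi(U)$ is unitary by the previous paragraph, so $\phi(U^*U)=\phi(I)=I=\phi(U)^*\phi(U)$ and likewise for $UU^*$; thus both inequalities are \emph{equalities} at every unitary. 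In other words, every unitary lies in the multiplicative domain of $\phi$.

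The technical heart is to promote this to genuine multiplicativity. Since $\phi^{(2)}$ is itself unital and completely positive, I would apply the Schwarz inequality of the previous paragraph to $\phi^{(2)}$ and to the element $\left[\begin{smallmatrix} U & 0 \\ Y & 0\end{smallmatrix}\right]$ of $\mtwo(\mn)$, for a fixed $U\in\un$ and arbitrary $Y\in\mn$. This produces a positive block matrix whose $(1,1)$ entry is $\phi(UU^*)-\phi(U)\phi(U)^*=0$. A positive semidefinite block matrix with a zero diagonal block has vanishing off-diagonal blocks, so $\phi(UY^*)=\phi(U)\phi(Y)^*$, that is, $\phi(UZ)=\phi(U)\phi(Z)$ for all $Z\in\mn$. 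Because the unitaries span $\mn$, linearity in the first variable upgrades this to $\phi(XZ)=\phi(X)\phi(Z)$ for all $X,Z\in\mn$, and hence $\phi$ is a unital $*$-homomorphism.

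Finally I would conclude that $\phi$ is inner. Writing $e_{ij}$ for the standard matrix units, the images $f_{ij}=\phi(e_{ij})$ obey the same multiplication and adjoint relations with $\sum_i f_{ii}=I$; fixing a unit vector $u_1$ with $f_{11}u_1=u_1$ and setting $u_i=f_{i1}u_1$, a short computation using $f_{1i}f_{i1}=f_{11}$ shows the $u_i$ are orthonormal and therefore a basis of $\cn$. The unitary $U$ determined by $Ue_i=u_i$ then satisfies $f_{ij}=u_iu_j^*=Ue_{ij}U^*$, whence $\phi(X)=UXU^*=V^*XV$ with $V=U^*$, as required. The step I expect to be the main obstacle is the multiplicative-domain argument of the third paragraph: it is the single point at which the isometric hypothesis (entering through the extreme-point observation) and complete positivity (through the Schwarz inequality for $\phi^{(2)}$) must be combined to force multiplicativity. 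Once multiplicativity is secured, the passage to an inner automorphism is routine structure theory for $\mn$.
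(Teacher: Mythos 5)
Your proof is correct, but after its first step it follows a genuinely different route from the paper's. The opening move is the same in both: an isometry of the finite-dimensional space $\mn$ has an isometric inverse, so $\phi$ maps $\mbox{\rm Ball}\left(\mn\right)$ onto itself, carries extreme points to extreme points, and hence satisfies $\phi(\un)\subseteq\un$. From there the paper works with a Stinespring--Kraus--Choi representation $\phi(X)=V^*\pi(X)V$ built from linearly independent Kraus operators $V_1,\dots,V_r$: the unitarity of $\phi(W)$, for $W\in\un$, forces the off-diagonal blocks of $\pi(W)$ relative to $\mbox{\rm ran}\,V\oplus(\mbox{\rm ran}\,V)^\bot$ to vanish, so $\mbox{\rm ran}\,V$ is invariant under every $\pi(X)$, and the spanning condition \eqref{minimal} then forces $r=1$ and $V_1$ unitary. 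You instead run Choi's multiplicative-domain argument: the Schur-complement derivation of the Schwarz inequalities $\phi(X^*X)\geq\phi(X)^*\phi(X)$ and $\phi(XX^*)\geq\phi(X)\phi(X)^*$ is correct, equality holds at every $U\in\un$ because $\phi(U)$ is unitary, and your $2\times 2$ computation with $\bigl[\begin{smallmatrix} U&0\\ Y&0 \end{smallmatrix}\bigr]$ (a positive semidefinite block matrix with a vanishing diagonal block has vanishing off-diagonal blocks) yields $\phi(UZ)=\phi(U)\phi(Z)$, whence full multiplicativity because $\un$ spans $\mn$; the matrix-unit argument that every automorphism of $\mn$ is inner then closes the proof. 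The details all check: $\phi^{(2)}$ inherits the Schwarz inequality because $\phi^{(4)}$ is positive, and $f_{11}=\phi(e_{11})$ is a nonzero projection (needed to choose $u_1$) because $\phi$ is injective. As to what each route buys: the paper's argument reuses the dilation machinery already assembled in Theorem \ref{cp thm} and exhibits $U$ directly as the lone Kraus operator, in keeping with the dilation-theoretic theme of the whole exposition; your argument avoids the Kraus representation and the minimality condition entirely, uses only the positivity of $\phi^{(2)}$ and $\phi^{(4)}$, and therefore actually proves a slightly stronger statement---the conclusion holds for $4$-positive (with Choi's polarization refinement, even $2$-positive) unital isometries, a threshold that is sharp since the transpose map is a positive unital isometry of $\mn$ that is not a unitary similarity.
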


\begin{proof} Assume that $\phi$ has a Stinespring--Kraus--Choi representation that is given by 
\[
\phi(X)\,=\,\sum_{i=1}^r V_j^*XV_j\,,\quad X\in\mn\,,
\]
for some linearly independent $V_1,\dots,V_r\in\mn$. Let $\{e_1,\dots,e_r\}$ be the standard orthonormal
basis for $\mathbb C^r$ and consider the function $V:\mathbb C^n\rightarrow\mathbb C^n\otimes\mathbb C^r$
for which
\[
V\xi\,=\,\sum_{i=1}^r V_i\xi\otimes e_i\,,\quad\xi\in\mathbb C^n\,.
\]
Define an injective unital homomorphism $\pi:\mn\rightarrow\mn\otimes\mr$ by $\pi(X)=X\otimes I_r$. Thus,
\begin{equation}\label{e:skc}
\phi(X)\,=\,V^*\pi(X) V\,=\,\sum_{i=1}^r V_j^*XV_j\,,\quad X\in\mn\,.
\end{equation}
Furthermore, because $V_1,\dots,V_r\in\mn$ are linearly independent,
\begin{equation}\label{minimal}
\mbox{Span}\,\{\pi(X)V\xi\,|\,X\in\mn,\;\xi\in\mathbb C^n\}\,=\,\mathbb C^n\otimes\mathbb C^r\,.
\end{equation}

The linear map $\phi$ is an isometry of a finite-dimensional space; thus, $\phi$
has an isometric inverse. Therefore, if 
$W\in\un$, then $\phi(W)$ is the midpoint between $X,Y\in\mbox{\rm Ball}\left(\mn\right)$
only if $W$ is the midpoint between $\phi^{-1}(X),\phi^{-1}(Y)\in\mbox{\rm Ball}\left(\mn\right)$, which is possible
only if $\phi^{-1}(X)=\phi^{-1}(Y)=W$ because unitary matrices are extreme points of 
$\mbox{\rm Ball}\left(\mn\right)$. Thus, $\phi(W)$ is an extreme point of 
$\mbox{\rm Ball}\left(\mn\right)$, which is to say that $\phi(W)\in\un$ for all $W\in\un$.

Decompose $\mathbb C^n\otimes\mathbb C^r$ as $\mbox{ran}\,V\oplus (\mbox{ran}\,V)^\bot$ and choose $W\in\un$.
With respect to this decomposition of $\mathbb C^n\otimes\mathbb C^r$,
the unitary matrix $\pi(W)$ has the form
\[
\pi(W)\,=\,\left[ \begin{array}{cc} \phi(W) & Z_{12} \\ Z_{21} & Z_{22} \end{array} \right]\,.
\]
Since
\[
\left[\begin{array}{cc} I_n&0\\ 0&I_{(r-1)n}\end{array}\right]
\,=\,\pi(W)^*\pi(W)\,=\,\left[ \begin{array}{cc} \phi(W)^*\phi(W)+Z_{21}^*Z_{21} & * \\ * & *\end{array}\right]\,,
\]
we have $Z_{21}^*Z_{21}=I_n-\phi(W)^*\phi(W)=0$ (as $\phi(W)$ is unitary). Thus, $Z_{21}=0$. Likewise, from $\pi(W)\pi(W)^*=I_{nr}$,
we deduce that $Z_{12}=0$.
Therefore, the off-diagonal blocks of $\pi(W)$ must be zero. This is true for every $W\in\un$,
and because $\un$ spans $\mn$, it is also true that
\[
\pi(X)\,=\,\left[ \begin{array}{cc} \phi(X) & 0 \\ 0 & *\end{array}\right]\,,
\]
for every $X\in\mn$. That is, the subspace $\mbox{ran}\,V$ is $\pi(X)$-invariant, for every $X\in \mn$. But
in light of \eqref{minimal}, this implies that the range of $V$ is $\mathbb C^n\otimes \mathbb C^r$, which is
possible only if $r=1$. Thus, $V_1$ is unitary and taking $U=V_1$ completes the
proof of the theorem.
\end{proof}
 
\section{Fixed Points}

The deepest aspect of Arveson's criterion for unitary similarity is the following theorem concerning
the set $\{X\in\mn\,:\,\omega(X)=X\}$ of fixed points of a unital completely positive linear transformation $\omega$
of $\mn$.

\begin{theorem}\label{boundary thm} {\rm (Boundary Theorem)} 
If $A\in\mn$ is irreducible and if $\omega:\mn\rightarrow\mn$
is a unital completely positive linear transformation such that $\omega(A)=A$, then 
$\omega(X)=X$ for every $X\in\mn$.
\end{theorem}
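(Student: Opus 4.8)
The plan is to show that $A$ lies in the \emph{multiplicative domain}
\[
\mathcal M_\omega=\{X\in\mn:\omega(X^*X)=\omega(X)^*\omega(X)\text{ and }\omega(XX^*)=\omega(X)\omega(X)^*\}\,.
\]
Granting this, the conclusion comes quickly. Because $\omega$ is positive it is $*$-preserving, so $\omega(A^*)=\omega(A)^*=A^*$. Now $\mathcal M_\omega$ is a $*$-subalgebra of $\mn$ on which $\omega$ restricts to a unital $*$-homomorphism (the standard consequence of the Schwarz inequality $\omega(X)^*\omega(X)\le\omega(X^*X)$ for completely positive maps), and $I\in\mathcal M_\omega$. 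Once $A\in\mathcal M_\omega$ we also have $A^*\in\mathcal M_\omega$, so $\mathcal M_\omega$ contains the $*$-algebra generated by $\{I,A,A^*\}$; by irreducibility of $A$ that algebra is all of $\mn$. Thus $\omega$ is a unital $*$-endomorphism of $\mn$ fixing the generators $I,A,A^*$, and a homomorphism fixing a generating set fixes everything, so $\omega=\mathrm{id}_{\mn}$.

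Everything therefore reduces to $\omega(A^*A)=A^*A$ and $\omega(AA^*)=AA^*$; I treat the first, the second being identical. The Schwarz inequality gives $D:=\omega(A^*A)-\omega(A)^*\omega(A)=\omega(A^*A)-A^*A\ge0$, so the task is to show that the positive semidefinite matrix $D$ vanishes. To detect this I would introduce an $\omega$-invariant state. Since $\omega(A)=A$ with $A\neq0$, the number $1$ is an eigenvalue of $\omega$ and hence of its trace-adjoint $\omega^\dagger$ (defined by $\mathrm{tr}(\omega(X)Y)=\mathrm{tr}(X\omega^\dagger(Y))$); as $\omega^\dagger$ is positive and trace-preserving, a Perron--Frobenius argument (or Brouwer's theorem applied to the density matrices, which $\omega^\dagger$ preserves) yields a fixed $\rho\ge0$. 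The functional $\phi=\mathrm{tr}(\rho\,\cdot\,)$ is then $\omega$-invariant, $\phi\circ\omega=\phi$, so invariance together with the Schwarz inequality gives $\phi(D)=\phi(\omega(A^*A))-\phi(A^*A)=0$. Since $\rho,D\ge0$, this forces $\rho^{1/2}D\rho^{1/2}=0$, hence $DQ=0$, where $Q$ is the support projection of $\rho$.

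The technical heart, and the step I expect to be the main obstacle, is to upgrade $DQ=0$ to $D=0$ by arranging that $\rho$ be \emph{faithful}, i.e. $Q=I$. I would take $\rho$ to have maximal support among invariant states. A short computation with $\phi$ shows that the complementary corner $(I-Q)\mn(I-Q)$ is $\omega$-invariant, and maximality of $Q$ forces $\omega$ to be strictly contractive there---otherwise a modulus-one eigenvalue, via the Ergodic Theorem (Theorem~\ref{ergodic thm}) applied to that corner, would produce an invariant state whose support is not dominated by $Q$. The crux is then to feed $\omega(A)=A$ into this picture to prove that $Q$ \emph{reduces} $A$; granting that, irreducibility gives $Q\in\{0,I\}$, and $\rho\neq0$ forces $Q=I$, so $\mathrm{tr}(\rho D)=0$ with faithful $\rho$ yields $D=0$. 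That fixing the \emph{irreducible} $A$ forces the recurrent support of $\omega$ to be everything is exactly where irreducibility is indispensable---the example following Theorem~\ref{main result} shows the statement fails without it---and I expect this implication to require the most care.

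As an alternative route to the endgame that makes the role of $\mathcal M_\omega$ transparent, I would pass to the minimal Stinespring--Kraus--Choi representation $\omega(X)=V^*(X\otimes I_r)V$, with $V:\cn\to\cn\otimes\mathbb C^r$ an isometry and $V_1,\dots,V_r$ linearly independent so that the dilation is minimal, exactly as in the proof of Theorem~\ref{kadison}. In this language $A\in\mathcal M_\omega$ is equivalent to $\mathrm{ran}\,V$ being invariant under both $A\otimes I_r$ and $A^*\otimes I_r$, hence reducing for $\{X\otimes I_r:X\in\mn\}$; passing to the commutant forces $VV^*=I_n\otimes E$ for a projection $E$, and a rank count together with minimality forces $r=1$. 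Then $\omega(X)=V_1^*XV_1$ with $V_1$ unitary commuting with $A$, so $V_1$ is scalar and $\omega=\mathrm{id}_{\mn}$.
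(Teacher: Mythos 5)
Your reduction is sound in its outer layers: granting $A\in\mathcal M_\omega$, the multiplicative-domain formalism correctly yields $\omega={\rm id}_{\mn}$ (your second route is only an alternative endgame and presupposes $A\in\mathcal M_\omega$ just as the first does), and your observation that $\mathrm{tr}(\rho D)=0$ with $\rho,D\geq 0$ gives $DQ=0$ is also correct. The genuine gap is precisely the step you defer: upgrading $DQ=0$ to $D=0$, i.e.\ proving that the maximal support projection $Q$ of the invariant states reduces $A$. You write ``granting that'' and predict it will ``require the most care,'' but this step is not a technicality to be granted --- it \emph{is} the Boundary Theorem. Everything before it (the Schwarz inequality, existence of an invariant state by Brouwer, $DQ=0$) holds for an arbitrary ucp map fixing an arbitrary matrix, and everything after it is routine; the whole force of the hypothesis that $A$ is irreducible must be spent exactly there. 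Filling the hole is genuinely substantial: the true statement behind it is that every fixed point of $\omega$ is block-diagonal with respect to $Q$, and the natural proof runs through (a) Perron--Frobenius theory for positive maps (Evans--H{\o}egh-Krohn) to show that the compression of $\omega$ to the corner $(I-Q)\mn (I-Q)$ has spectral radius strictly less than $1$ --- the paper's Ergodic Theorem does not suffice here, since that corner map is no longer unital and a modulus-one eigenvalue of a positive map need not admit a positive eigenvector without further argument --- and (b) a commutant characterization of the fixed points of the channel compressed to $Q\mn Q$, where the invariant state \emph{is} faithful; this is itself another multiplicative-domain argument plus a linear-independence argument on Kraus operators. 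As written, the proposal postpones the theorem's entire content into an unproven claim.

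For comparison, the paper closes exactly this hole by a device that avoids the recurrent/transient decomposition altogether: the ergodic average $\Omega=\lim_m \frac{1}{m}\sum_{k=0}^{m-1}\omega^k$ is a conditional expectation onto the fixed-point set $\oss$; the Choi--Effros product $X\odot Y=\Omega(XY)$ makes $\oss$ an algebra; the bimodule identity $\Omega(YZ)=\Omega(Y\Omega(Z))$, together with the fact that $\oss\supseteq\{I,A,A^*\}$ generates $\mn$ as an algebra (this is the only place irreducibility enters), shows that $\Omega$ is an algebra homomorphism of $\mn$ onto $(\oss,\odot)$; and simplicity of $\mn$ forces $\Omega$ to be injective, hence the identity, so $\oss=\mn$ and $\omega={\rm id}_{\mn}$. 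If you wish to keep your multiplicative-domain framing, the shortest repair is to import this argument wholesale; the invariant-state machinery then becomes unnecessary.
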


\begin{proof}
Let $\osr=\mbox{Span}\,\{I,A,A^*\}$ so that $\mn$ is the algebra generated by $\osr$
and $\omega_{\vert\osr}={\rm id}_\osr$. Let $\oss=\{X\in\mn\,:\,\omega(X)=X\}$, which is a
unital subspace of $\mn$ that contains the identity matrix and is closed under the involution $Z\mapsto Z^*$.
Because $\oss\supseteq\osr$, the algebra generated by $\oss$ is $\mn$.

The Ergodic Theorem asserts that $\Omega=\displaystyle\lim_{m\rightarrow\infty}\,\frac{1}{m}\sum_{k=0}^{m-1} \omega^k$
is a conditional expectation that maps $\mn$ onto the fixed point space $\oss$.
Thus, by the Choi--Effros Theorem \cite{choi--effros1977}, \cite[Theorem 15.2]{Paulsen-book},
 the linear space $\oss$ is an algebra
under the product $\odot$ defined by
\begin{equation}\label{ce product}
X\odot Y\,=\,\Omega(XY)\,,\quad X,Y\in\oss\,.
\end{equation}
If $Y\in\oss$ and $Z\in\mn$, 
then by Theorem \ref{cp thm}\eqref{ucp proj},
\[
\Omega (YZ)\;=\;\Omega(Y\Omega(Z))\;=\;Y\odot\Omega(Z)\;=\;\Omega(Y)\odot\Omega(Z)\,.
\]
Likewise, if $Y_1,Y_2\in \oss$ and $Z\in\mn$, then 
\[
\Omega((Y_1Y_2)Z)\;=\;\Omega(Y_1\Omega(Y_2Z))\;=\;Y_1\odot \Omega(Y_2Z)\;=\;\left(\Omega(Y_1)\odot\Omega(Y_2)\right)\odot\Omega(Z)\,.
\]
By induction, if $\mathfrak a$ is any word in $2q$ noncommuting variables, and if $Y_1,\dots,Y_q\in\oss$ and $Z\in\mn$, then
\[
\Omega\left( \mathfrak a(Y_1,\dots,Y_q,Y_1^*,\dots,Y_q^*)Z\right)\;=\;
\left(\mathfrak a_{\odot}(\Omega(Y_1),\dots,\Omega(Y_q),\Omega(Y_1)^*,\dots,\Omega(Y_q)^*)\right)\odot\Omega(Z)\,,
\]
where $\mathfrak a_{\odot}(\Omega(Y_1),\dots,\Omega(Y_q),\phi(Y_1)^*,\dots,\Omega(Y_q)^*)$ 
denotes the $\odot$-product of the letters of the word $\mathfrak a$. Because the algebra generated by $\oss$, namely $\mn$,
 is given by
linear combinations of elements of the form
$ \mathfrak a(Y_1,\dots,Y_q,Y_1^*,\dots,Y_q^*)$ for various positive integers $q$, words $\mathfrak a$, and elements $Y_j\in\oss$,
the linear transformation $\Omega$ satisfies
\[
\Omega(WZ)\;=\;\Omega(W)\odot\Omega(Z)\,,\;\forall\,W,Z\in \mn\,.
\]
That is, $\Omega$ is a homomorphism of the associative algebra $\mn$ onto the associative algebra $\oss$ with product $\odot$.
Because $\mn$ has no nontrivial ideals and $\oss\neq\{0\}$, 
$\Omega$ must in fact be an isomorphism. Thus, $\ker\Omega=\{0\}$, which implies that
the idempotent $\Omega$ is the identity transformation. Therefore, the range of $\Omega$, namely the fixed point set $\oss$,
is all of $\mn$.
\end{proof}

\begin{corollary} If $\omega$ is a unital completely positive linear transformation of $\mn$
for which $\ker(\omega-{\rm id}_{\mn})\cap\mathcal O_{\rm irr}\neq\emptyset$, then $\omega$
is the identity transformation.
\end{corollary}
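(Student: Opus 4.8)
The plan is to observe that this corollary is essentially a repackaging of the Boundary Theorem (Theorem \ref{boundary thm}), so that the only genuine task is to translate the set-theoretic hypothesis into the hypothesis required by that theorem. Since all the real difficulty already resides in the Boundary Theorem, which I may assume, the argument will be very short.

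First I would unpack the assumption $\ker(\omega-{\rm id}_{\mn})\cap\mathcal O_{\rm irr}\neq\emptyset$. By the definition of intersection, this asserts the existence of a matrix $A$ lying in both sets. Membership $A\in\ker(\omega-{\rm id}_{\mn})$ means $(\omega-{\rm id}_{\mn})(A)=0$, equivalently $\omega(A)=A$; membership $A\in\mathcal O_{\rm irr}$ means $A$ is irreducible. Thus the hypothesis furnishes precisely an irreducible matrix $A$ that is fixed by the unital completely positive map $\omega$, which is exactly the data demanded by the Boundary Theorem.

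With such an $A$ in hand, I would simply invoke Theorem \ref{boundary thm}: since $A$ is irreducible and $\omega$ is unital completely positive with $\omega(A)=A$, that theorem yields $\omega(X)=X$ for every $X\in\mn$. This is precisely the statement that $\omega={\rm id}_{\mn}$, completing the proof. I do not anticipate any obstacle, since no step is required beyond interpreting the notation and quoting the Boundary Theorem; the entire substance of the result was already carried out in the proof of Theorem \ref{boundary thm}.
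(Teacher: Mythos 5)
Your proposal is correct and is exactly the intended argument: the paper states this corollary without proof precisely because it is the immediate translation of the hypothesis (an irreducible fixed point of $\omega$ exists) into the hypotheses of the Boundary Theorem, which then gives $\omega={\rm id}_{\mn}$.
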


\begin{corollary}\label{choquet} {\rm (Noncommutative Choquet Theorem)} If $A\in\mn$ is irreducible
and $\osr=\mbox{\rm Span}\,\{I,A,A^*\}$, then the unital completely positive linear transformation
$\iota:\osr\rightarrow\mn$ defined by $\iota(X)=X$, for $X\in\osr$, has a unique 
completely positive extension to $\mn$.
\end{corollary}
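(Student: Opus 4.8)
The plan is to recognize that the unique completely positive extension of $\iota$ is the identity transformation ${\rm id}_{\mn}$ itself, and that uniqueness is an immediate consequence of the Boundary Theorem (Theorem \ref{boundary thm}).

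First I would dispatch existence, which is trivial: the identity transformation ${\rm id}_{\mn}:\mn\rightarrow\mn$ is unital and completely positive, and it restricts on $\osr$ to the inclusion $\iota$. So ${\rm id}_{\mn}$ is a completely positive extension of $\iota$, and there is nothing to prove on this side. In greater generality, existence would be supplied by the Arveson Extension Theorem, Theorem \ref{cp thm}\eqref{ext thm}; but here the target already contains $\osr$, so the identity serves directly.

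The real content is uniqueness, and here I would invoke the Boundary Theorem. Let $\Phi:\mn\rightarrow\mn$ be an arbitrary completely positive extension of $\iota$. Since $I\in\osr$ and $\iota(I)=I$, I would first note that $\Phi(I)=I$, so $\Phi$ is a \emph{unital} completely positive transformation of $\mn$. Next, because $A\in\osr$, I have $\Phi(A)=\iota(A)=A$. At this point all the hypotheses of Theorem \ref{boundary thm} are satisfied---$A$ is irreducible and $\Phi$ is a ucp map with $\Phi(A)=A$---and I would conclude directly that $\Phi={\rm id}_{\mn}$. As $\Phi$ was an arbitrary extension, the extension is unique.

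I do not anticipate any genuine obstacle: the entire difficulty has already been discharged in proving the Boundary Theorem, and this corollary is little more than a translation of that theorem into the language of extensions of the operator system $\osr=\mbox{\rm Span}\,\{I,A,A^*\}$. The single point that merits a moment's attention is the automatic unitality of a completely positive extension of the unital map $\iota$, since it is precisely unitality that licenses the application of the Boundary Theorem.
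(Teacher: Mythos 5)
Your proof is correct and is exactly the argument the paper intends: the corollary is stated as an immediate consequence of the Boundary Theorem, and your deduction (any completely positive extension $\Phi$ satisfies $\Phi(I)=I$ and $\Phi(A)=A$, hence $\Phi={\rm id}_{\mn}$ by Theorem \ref{boundary thm}) is the canonical route, with the automatic unitality of the extension correctly flagged as the one point needing verification.
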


\section{Proof of Theorem \ref{main result}}
 
If $A,B\in\mn$ are unitarily similar, 
then a straightforward calculation verifies that 
$\|A\otimes H+I\otimes K\|=\|B\otimes H+I\otimes K\|$ for all $H,K\in\mn$.

Conversely, assume that
$A,B\in\mn$, $A\in\mathcal O_{\rm irr}$, and $\|A\otimes H+I\otimes K\|=\|B\otimes H+I\otimes K\|$, for all $H,K\in\mn$.
Define a linear map $\phi_0:\mbox{Span}\{I,A\}\rightarrow\mn$
by 
\[
\phi_0(\alpha_0I +\alpha_1 A)\;=\;\alpha_0I+\alpha_1B\,,\;\forall\,\alpha_0,\alpha_1\in\mathbb C\,.
\]
Because $\|A\otimes H+I\otimes K\|=\|B\otimes H+I\otimes K\|$ for all $H,K\in\mn$, 
the linear map 
\[
\phi_0^{(n)}:\mbox{Span}\{I,A\}\otimes\mn\rightarrow\mn\otimes\mn\,,
\]
in which
\[
\phi_0^{(n)}\left([X_{st}]_{1\leq s,t\leq n}\right)\,=\,[\phi_0(X_{st})]_{1\leq s,t\leq n}\,,
\]
is an isometry. 

Let $\osr=\mbox{span}\,\{I,A,A^*\}$. 
By \cite[Proposition 3.5]{Paulsen-book}, the unital  linear transformation
$\phi:\osr\rightarrow\mn$ defined by
\[
\phi(\alpha I+\beta A+\gamma  A^*)\;=\;\alpha I+\beta \phi_0(A)+\gamma\phi_0(A)^* 
\]
is completely positive and satisfies $\phi(A)=B$. Therefore, by Theorem \ref{cp thm}{\eqref{ext thm}, there is a completely positive extension of
$\phi$ from $\osr$ to $\mn$; without loss of generality, let $\phi$ denote the extended completely positive transformation
of $\mn$.
By similar reasoning, there is a unital completely positive linear transformation $\psi:\mn\rightarrow\mn$ such that $\psi(B)=A$.
Hence, $\omega=\psi\circ\phi$ is a unital
completely positive linear transformation of $\mn$ with $\omega(A)=A$.
By the Boundary Theorem (Theorem \ref{boundary thm}), $\omega=\psi\circ\phi$ is the identity transformation, and so
\[
\|X\|\;=\;\|\psi\left(\phi(X)\right)\|\;\leq\;\|\phi(X)\|\;\leq\;\|X\|
\]
for every $X\in\mn$.
That is, $\phi:\mn\rightarrow \mn$ is a unital completely positive isometry. 
Therefore, by Kadison's Isometry Theorem (Theorem \ref{kadison}), there is a $U\in\un$ such that
$\phi(X)=U^*XU$ for every $X\in\mn$. Hence, $B=U^*AU$.

\section{Discussion}

Any proof of Arveson's criterion for unitary similarity likely requires the Boundary Theorem (Theorem \ref{boundary thm}). 
If one compares the proof of Specht's Theorem, as given by Kaplansky in \cite[Theorem 63]{Kaplansky-linear-algebra-book},
with the proof of the Boundary Theorem herein, it is clear that properties of matrix rings have a crucial role in arriving 
at these results, even if the statements of the results are concerned only with single matrices and the
proofs, for the most part, involve only
linear spaces of matrices.

Our proof of the Boundary Theorem is different from Arveson's (and from Davidson's \cite{davidson1981})
in that it is based on methods that are used in the study of the noncommutative \v Silov boundary, which was
introduced by Arveson in \cite{arveson1969} and developed further by
Hamana \cite{hamana1979b} and Blecher \cite{blecher2001}. In contrast, Arveson and Davidson approach the theorem from the perspective
of the noncommutative Choquet boundary\footnote{The Choquet boundary of a linear space $\ose$ of continuous
complex-valued functions on a compact Hausdorff space $X$---where $\ose$ separates the points of $X$, contains the constant functions, and is closed under
complex conjugation---is the set of all $x_0\in X$ for which the positive linear functional $f\mapsto f(x_0)$, $f\in\ose$, has a unique
extension to a positive linear functional on the space of all continuous functions $g:X\rightarrow\mathbb C$. Corollary \ref{choquet}
is exactly this idea, but in a noncommutative environment in which $\osr$ plays the role of $\ose$.}. These noncommutative
\v Silov and Choquet boundaries are used by Arveson \cite{arveson2010} to classify,
up to complete order isomorphism, all subspaces of matrices that contain
the identity matrix and are closed under the conjugate transpose. Such a classification is indeed a broader, more sophisticated 
form of the main theorem (on unitary similarity) of the present paper, yet is still within the scope and interest
of core linear algebra.

\section{Acknowledgement}

I thank Vladimir Sergeichuk for helpful discussions on the problem of unitary similarity
and for his encouragement to write this account of Arveson's
criterion, and Roger Horn for very useful editorial suggestions. The original draft of
this paper was written at Institut Mittag-Leffler (Djursholm, Sweden) in November 2010. This work is
supported in part by NSERC (Canada).

 
\bibliographystyle{amsplain2}
\bibliography{arv}

\end{document}